\newcommand{\qed}{\hfill $\square$}
\newcommand{\remark}{\noindent{\bf Remark.} }
\newenvironment{proof}{\noindent{\em Proof}.}{\qed\bigskip}
\newtheorem{theorem}{Theorem}[section]
\newtheorem{lemma}[theorem]{Lemma}
\newtheorem{corollary}[theorem]{Corollary}
\newtheorem{conjecture}{Conjecture}[section]
\newcommand{\comments}[1]{}
\title{Every connected graph admits a local antimagic orientation and almost every graph admits an antimagic orientation}
\author{
Eranda Dhananjaya
\thanks{Department of Applied Mathematics, National Chung Hsing University, Taichung 40227, Taiwan
{\tt Email:edhananjaya1991@gmail.com} supported by NSTC 112-2115-M-005 -002 -MY2.}
\and
Wei-Tian Li
\thanks{Department of Applied Mathematics, National Chung Hsing University, Taichung 40227, Taiwan
{\tt Email:weitianli@nchu.edu.tw} supported by NSTC 112-2115-M-005 -002 -MY2.}
}
\date{\today}
\begin{document}

\maketitle

\begin{abstract}
An undirected graph $G$ is said to admit an antimagic orientation if there exist an orientation $D$ and a bijection between $E(G)$ and $\{1,2,\ldots,|E(G)|\}$ such that any two vertices have distinct vertex sums, where the vertex sum of a vertex is the sum of the labels of the in-edges minus that of the out-edges incident to the vertex. 
It is conjectured by Hefetz, M\"{u}tze, and Schwartz that every connected graph admits an antimagic orientation. A weak version of this problem is to require the distinct vertex sums only for the adjacent vertices. In that case, we say the graph admits a local antimagic orientation. 
Chang, Jing, and Wang~\cite{CJW20} conjectured that every connected graph admits a local antimagic orientation. In this paper, we give an affirmative answer to the conjecture of Chang et al., and show that almost every graph satisfies the conjecture of Hefetz et al. 
\end{abstract}

{\bf Keywords:} Antimagic orientation, local antimagic orientation, breadth-first search, radius of a graph, random graphs.

\section{Introduction}

Given an undirected simple graph on $m$ edges, can one label its edges with $1,2,\ldots, m$ one on one so that when every vertex receives the sum of the labels of all edges incident to the vertex, all sums are pairwise distinct?
This graph labeling problem was proposed by by Hartsfield and Ringel~\cite{HR90} in the 1990s. They conjectured that such kind of labelings exist for all connected graphs on at least two edges. 
Nowadays, many graphs have been verified to satisfy the conjecture and many analogous problems are posed.  
Let $G$ be a graph, and let $V(G)$ and $E(G)$ be the vertex and edge sets of $G$, respectively. Whenever there is a mapping from $E(G)$ to a set $L$ of real numbers, we define the {\em vertex sum} of a vertex, induced by the mapping, to be the sum of the labels of the edges incident to the vertex. 
The original problem of Hartsfield and Ringel is looking for a mapping $\tau$ such that  
\begin{description}
    \item{(1)} $\tau$ is an injection.
    \item{(2)} $L=\{1,2\ldots,|E(G)|\}$.
    \item{(3)} the vertex sums are pairwise distinct.
\end{description}
A graph equipped with a mapping $\tau$ satisfying all the conditions is called {\em antimagic}. The statement of the conjecture of Hartsfield and Ringel in~\cite{HR90} is the following:
\begin{conjecture}\cite{HR90}\label{HR90}
Every connected graph other than $K_2$ is antimagic.    
\end{conjecture}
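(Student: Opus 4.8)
The plan is to attack the statement by reducing the ``all connected graphs'' claim to a systematic labeling scheme built on a spanning structure of $G$. First I would fix a spanning tree $T$ of the connected graph $G$, root it, and process the vertices in a breadth-first order $v_1, v_2, \ldots, v_n$. The idea is to reserve the largest labels for the tree edges encountered last, so that each newly visited vertex inherits a vertex sum (the sum of its incident edge labels) strictly dominating all sums already fixed; the remaining non-tree edges are then assigned the smaller labels, and one argues that their aggregate contribution is too small to undo the separation already forced by the tree-edge labeling. This mirrors the classical strategy that resolves the tree case, where a root-to-leaves sweep produces monotone vertex sums.

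The second ingredient would be a case analysis driven by the degree sequence. A vertex of degree $d$ has a vertex sum confined to the interval between the sum of the $d$ smallest and the sum of the $d$ largest available labels, so vertices whose degrees differ substantially are easy to keep apart. The delicate situation is when many vertices share a common degree, and in particular the regular case, where degree gives no separating information at all. For regular and near-regular $G$ I would switch to a probabilistic argument: label the edges by a uniformly random bijection with $\{1,2,\ldots,|E(G)|\}$ and bound the probability that a fixed pair of vertices collides, exploiting that each vertex sum is a sum of $d$ labels with variance growing in $|E(G)|$, then close the argument with the Lov\'asz Local Lemma or a union bound over the $\binom{n}{2}$ potential collisions.

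The hard part --- and the reason this remains the central open conjecture in the area --- is the coupling forced by the bijection requirement: the labels form a \emph{single} permutation of $\{1,2,\ldots,|E(G)|\}$, so any choice that separates one pair of vertices simultaneously constrains every other vertex sum. Local greedy repairs therefore propagate globally, and the naive union bound is not summable for sparse graphs where $|E(G)| = O(n)$ and each vertex sum is a short, heavily dependent sum. I expect the genuine obstruction to sit precisely here, in sparse near-regular graphs of small maximum degree, where neither the degree-separation heuristic nor the concentration of a permutation sum is strong enough on its own. Overcoming it would require a structural decomposition that localizes the permutation constraint --- isolating edge-disjoint pieces whose labels can be chosen almost independently --- which is exactly the step a full resolution of Conjecture~\ref{HR90} still lacks.
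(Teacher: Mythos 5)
There is no proof here to check, and there could not be: the statement you were assigned is Conjecture~\ref{HR90}, the Hartsfield--Ringel antimagic labeling conjecture, which is an open problem. The paper does not prove it either --- it cites it only as motivation and then establishes strictly weaker relatives: Theorem~\ref{NT} (every graph without isolated vertices admits a universal \emph{local} antimagic \emph{orientation}) and Theorem~\ref{random} (\emph{almost} every graph admits an antimagic orientation, via the radius-two result of Corollary~\ref{radius2}). Your proposal is, to its credit, honest about the situation: after sketching the two standard lines of attack, you concede in your final paragraph that the single-permutation coupling in sparse near-regular graphs is exactly where both lines break down. That concession is the gap. What you have written is a survey of why the conjecture is hard, not an argument that it is true, so there is no deductive content to certify; the ``proof'' ends precisely where a proof would have to begin.

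Two specific inaccuracies are worth flagging in case you develop this further. First, your spanning-tree sweep does not ``mirror the classical strategy that resolves the tree case'': the tree case of Conjecture~\ref{HR90} is itself still open (only special families of trees are known to be antimagic), so there is no resolved base case to imitate. Second, the probabilistic route you describe --- a uniformly random bijection plus a union bound or the Lov\'asz Local Lemma --- is essentially how Alon, Kaplan, Lev, Roditty, and Yuster handle \emph{dense} graphs in~\cite{A04}, and it is known to fail for sparse graphs for exactly the dependency reasons you identify; the Local Lemma does not rescue it, because the bijection constraint makes every vertex sum depend on every label, so no useful dependency graph exists. If you want a result actually within reach of these ideas, the paper's own strategy is instructive: it sidesteps the coupling problem by (i) passing to the orientation variant, where Lemma~\ref{L_1} (an Euler-circuit argument) controls both the sign and the magnitude of vertex sums level by level in a BFS decomposition, and (ii) settling for ``almost every graph'' through the radius-two case. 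That is precisely the kind of structural localization your last paragraph calls for --- but to date it is only known to work for the directed relaxations, not for Conjecture~\ref{HR90} itself.
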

The most pioneering results on Conjecture~\ref{HR90} are the result of Alon, Kaplan, Lev, Roditty, and Yuster   
for dense graphs~\cite{A04}, and the results of Cranston, Liang, and Zhu~\cite{CLZ15} for odd regular graphs, and Chang, Liang, Pan, and Zhu for even regular graphs~\cite{C16}. 
In the following, we introduce some variations of the ordinary antimagic labeling.
For the set $L$ in condition (2), if it is replaced by a set of $|E(G)|$ consecutive integers starting with $k+1$, then the graph with such a mapping is called {\em $k$-shifted antimagic}, see~\cite{CCLP21};   
If for each set $L$ of $|E(G)|$ positive real numbers there is a corresponding mapping, then the graph is {\em universal antimagic}, named by Matamala and Zamora~\cite{MZ17}; 
For the up-to-date results on these different types of antimagic problems, we recommend the readers the survey~\cite{G19}.

Some graph labeling problems are aiming to find distinct vertex sums for adjacent vertices using $L$ as small as possible. 
Let $k$ be a positive integer. 
A {\em $k$-edge-weighting} is a mapping from $E(G)$ to $\{1, 2, \ldots, k\}$ and is {\em neighbor-sum-distinguishing (nsd)} if the vertex sums of any two adjacent vertices are distinct. 
This concept was introduced with the 1-2-3 Conjecture in 2004 by Karo\'{n}ski, \L uczak, and Thomason in~\cite{KLT04}.

\begin{conjecture}~\cite{KLT04}\label{KLT04}
     Every graph with no isolated edge admits an nsd  $3$-edge-weighting.
\end{conjecture}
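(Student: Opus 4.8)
The plan is to attack the statement by the standard greedy strengthening of the algorithmic method that is already known to produce neighbor-sum-distinguishing weightings from the larger palette $\{1,2,3,4,5\}$, and then to try to squeeze the palette down to $\{1,2,3\}$. First I would fix an ordering $v_1,v_2,\ldots,v_n$ of the vertices and finalize them one at a time, maintaining the invariant that once $v_i$ has been processed its vertex sum is frozen and already differs from the sums of all of its earlier-processed (hence finalized) neighbors. When I come to process $v_i$, the only weights I am still free to modify are those on the ``future'' edges joining $v_i$ to vertices $v_j$ with $j>i$; I would use the flexibility in these weights to steer the vertex sum of $v_i$ away from the forbidden values contributed by its finalized neighbors.

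The key quantitative step is to count how far the sum of $v_i$ can be moved using only weights in $\{1,2,3\}$. If $v_i$ has $t$ future edges, then letting each of them range over $\{1,2,3\}$ makes the total contribution of the future edges run over every integer in the interval $[t,3t]$, a block of $2t+1$ consecutive values; since the past edges contribute a fixed constant, the sum of $v_i$ can be set to any of $2t+1$ consecutive integers. As the number of finalized neighbors of $v_i$ is at most $d(v_i)-t$, the greedy choice succeeds at $v_i$ provided $2t+1>d(v_i)-t$, that is, whenever $v_i$ retains at least a third of its incident edges as future edges. I would therefore try to build the ordering so that as many vertices as possible meet this bound, for instance by processing along a breadth-first search from a suitable root or by a degeneracy-type ordering, and to reserve a separate global parity argument for the remaining vertices.

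The main obstacle, and the reason the three-weight version is dramatically harder than the five-weight one, is that no ordering can keep a third of the edges future at \emph{every} vertex: the vertices near the end of the order are incident almost entirely to finalized neighbors, so their sums are essentially forced and the purely local counting above breaks down exactly where it is needed most. The hard part will therefore be a genuinely global argument that simultaneously balances the residual freedom across all vertices rather than deciding each one in isolation --- most plausibly an algebraic Combinatorial-Nullstellensatz computation on the polynomial whose nonvanishing encodes all the ``distinct sum'' constraints, or a careful discharging of the parities of the vertex sums throughout the graph. I expect the low-degree vertices, especially those of degree $1$ and $2$, to be the delicate cases, since they carry almost no local flexibility and propagate their constraints along paths; handling them, together with the isolated-edge exclusion that makes the statement fail for $K_2$, is where the real work lies.
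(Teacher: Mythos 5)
This statement is not proved in the paper at all: it is the 1-2-3 Conjecture of Karo\'{n}ski, \L uczak, and Thomason, which the paper states purely as background motivation (Conjecture~\ref{KLT04}) and never claims to settle. The paper only records the partial result of Kalkowski, Karo\'{n}ski, and Pfender that an nsd $5$-edge-weighting always exists. So there is no proof in the paper to compare yours against, and the honest question is whether your proposal stands on its own as a proof. It does not.

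The gap is the one you yourself name and then defer. Your greedy invariant argument shows that vertex $v_i$ can be handled whenever its number $t$ of future edges satisfies $2t+1 > d(v_i)-t$, i.e.\ when roughly a third of its incident edges are still free. But in any linear ordering the last vertex has $t=0$, and more generally the vertices processed late are incident almost entirely to finalized edges, so the counting bound fails exactly where distinctness still has to be enforced. At that point you write that the remaining work should be done by ``a genuinely global argument,'' either a Combinatorial Nullstellensatz computation or a discharging of parities, but neither is carried out, and no reduction is given showing that only some tractable class of configurations survives to that stage. That deferred step is not a technicality: it is the entire content of the conjecture, which resisted exactly this kind of local/greedy attack for two decades (the greedy scheme you describe is essentially the one that yields the $5$-weight result, and even there the known proof needs the extra freedom of revisiting earlier edges, not just choosing future ones). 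A correct proof would have to supply the global argument you only gesture at; as written, the proposal is a plan with its central lemma missing, not a proof.
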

The above conjecture is generally known as the 1-2-3 Conjecture. 
The most significant progress toward the 1-2-3 Conjecture thus far is attributed to Kalkowski, Karoński, and Pfender~\cite{KKP10}, who established that every graph lacking isolated edges can be equipped with an nsd 5-edge-weighting. 
Inspired by the 1-2-3 Conjecture, Bensmail, Senhaji, and Lyngsie~\cite{BSL17} introduced the concept of local antimagic labelings for graphs. 
A {\em local antimagic labeling} of a graph $G$ is an nsd bijective mapping denoted as $\phi: E(G) \rightarrow \{1, 2, \ldots, |E(G)|\}$. Bensmail et al. in \cite{BSL17} proposed a conjecture asserting that every graph without isolated edges can be assigned a local antimagic labeling. 
Subsequently, Haslegrave~\cite{H18} validated this conjecture through the utilization of the probabilistic method.

The exploration of antimagic labeling in directed graphs began considerably later, with its initiation by Hefetz, M\"{u}tze, and Schwartz in 2010~\cite{HMS10}. 
For a directed graph $\Vec{G}$, the vertex sum of a vertex $v$ is defined as the sum of the labels of all edges entering $v$ minus the sum of the labels of all edges leaving $v$. 
An {\em antimagic orientation} for an undirected graph $G$ is characterized by the existence of an orientation $D$ on its edge set $E(G)$ and a bijective mapping from $E(G)$ to the set $\{1, 2, \ldots , |E(G)|\}$ 
such that the orientation and the bijective mapping must fulfill the requirement that the vertex sums, as previously defined in this paragraph  are distinct for all vertices in the graph.
In their work~\cite{HMS10}, Hefetz et al. posed two thought-provoking questions. The first question delves into whether every orientation of a connected graph, except for the directed cycle and path on tree vertices, demonstrates antimagic properties, while the second question scrutinizes the possibility of affording an antimagic orientation to each connected graph. 
Furthermore, they provided partial results for these inquiries and introduced the conjecture ``Every connected graph admits an antimagic orientation" in connection with the latter question.

\begin{conjecture}{\rm \cite{HMS10}}\label{HMS10}
 Every connected graph admits an antimagic orientation. 
\end{conjecture}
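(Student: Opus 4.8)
The plan is to bootstrap from the local antimagic orientation whose existence for every connected graph is established in this paper. Fix such a pair $(\phi, D)$, where $\phi \colon E(G) \to \{1, \dots, m\}$ is a bijection and $D$ an orientation with $s_D(u) \neq s_D(v)$ for every edge $uv \in E(G)$, writing $s_D(v)$ for the vertex sum of $v$. The full conjecture is exactly the assertion that $(\phi, D)$ can be further arranged so that $s_D$ is injective on all of $V(G)$, not merely on adjacent pairs. Accordingly, I would study the \emph{collision partition} of $V(G)$ into classes of vertices sharing a common vertex sum. Since distinct sums already hold across every edge, each collision class is an independent set of $G$, and the task reduces to driving all classes down to singletons without disturbing the adjacency distinctness already secured.

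Then I would assemble a perturbation toolkit. For a fixed bijection $\phi$, reversing a single arc $e = uv$ with label $\ell = \phi(e)$ leaves every vertex sum unchanged except that it shifts $s_D(u)$ and $s_D(v)$ by $+2\ell$ and $-2\ell$ in some order; hence the vertex-sum vectors attainable by re-orientation alone form a coset of the integer lattice spanned by the vectors $2\phi(e)(\mathbf{1}_u - \mathbf{1}_v)$ over $e = uv \in E(G)$. Allowing $\phi$ to vary, transposing the labels of two arcs further adjusts the sums at their at most four endpoints by controlled amounts. The strategy is to process collision classes one at a time: for a class $C$ with $|C| \ge 2$, I would select for each vertex of $C$ a nearby move — an incident-arc reversal or a localized label transposition — that spreads the common sum into $|C|$ distinct values, while checking that adjacency distinctness is preserved and no new collision is created elsewhere. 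A potential or discharging argument that assigns weight to each unresolved class and exhibits, at every stage, a move strictly decreasing the total weight would then drive the process to termination.

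The main obstacle is precisely the global interference between moves: a reversal or transposition chosen to separate one class can merge two vertices of another, and no evident monotone quantity is forced to decrease, so termination of the one-class-at-a-time scheme is not automatic. This difficulty is most severe for sparse, highly symmetric graphs — near-regular or vertex-transitive instances — where the degree sequence offers no a priori separation of sums and where automorphisms tend to reproduce collisions after any symmetric adjustment; these are exactly the cases on which the degree-based heuristics behind the random-graph result break down. To control the interference I would aim to make the separating moves at distinct classes \emph{edge-disjoint} and \emph{label-disjoint}, so that their effects on the sum vector are independent, and then seek a Hall-type or expansion condition guaranteeing a simultaneous system of such disjoint moves, one per non-singleton class. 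Establishing that such a system always exists is, I expect, the crux, and is where a probabilistic allocation — choosing each class's move at random and applying a Lov\'asz Local Lemma bound on the collision events, whose dependency degree is governed by the bounded overlap of the moves — looks most promising, though the heavy correlation among vertex sums that share edges will demand delicate dependency bookkeeping.

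An alternative route, closer in spirit to the breadth-first-search machinery of this paper, is to pre-separate sums by layer. Rooting a BFS tree and partitioning $V(G)$ into layers $L_0, L_1, \ldots, L_r$, with $r$ the radius, I would allocate to each layer a pairwise-disjoint target interval of integers for its vertex sums, so that collisions can occur only within a single layer; the now purely intra-layer collisions are then easier to resolve by the local moves above, since vertices of a common layer need coordination only among themselves. The delicate point here is the tension between a bijective labeling onto $\{1, \ldots, m\}$ and the demand that each layer's attainable sums lie inside its prescribed interval: the intervals must tile a range fixed by the label set, yet each must be wide enough to host $|L_i|$ distinct values, and making the within-layer reachable sums simultaneously disjoint across layers and spacious within a layer is a tight packing constraint that I anticipate to be the bottleneck of this approach.
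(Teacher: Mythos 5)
You are attempting to prove a statement that the paper does not prove and does not claim to prove: Conjecture~\ref{HMS10} is an open conjecture of Hefetz, M\"utze, and Schwartz, and this paper only establishes two partial results toward it --- the local variant (Theorem~\ref{NT}, where distinctness of vertex sums is required only for adjacent vertices) and the full statement for graphs of radius two, which via Theorem~\ref{diameter} yields the ``almost every graph'' result (Theorem~\ref{random}). So there is no proof in the paper to match your attempt against, and your proposal, read on its own terms, is a research program rather than a proof: both of your routes terminate at steps you yourself flag as unresolved, and those are precisely the steps that carry all the difficulty.

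Concretely, in the first route the perturbation toolkit (arc reversals shifting sums by $\pm 2\ell$, label transpositions) is correctly described, but nothing forces the one-class-at-a-time scheme to terminate: you name no potential function and prove no decrease, and the proposed Lov\'asz Local Lemma rescue is not viable as stated, since the ``bad'' collision events $s(u)=s(v)$ range over all $\binom{n}{2}$ pairs and two vertex sums are correlated whenever any chain of shared edges and shared labels connects them, so the dependency degree is not bounded by any local parameter of the moves. In the second route you have essentially rediscovered the paper's own BFS-layer machinery, and the paper's proofs show exactly where it stops: alternating the sign of the sums between consecutive levels and ordering the reserved edges $e_v$ within a level gives distinctness inside each $V_i$ and across adjacent levels --- that is, the \emph{local} result --- but vertices in distinct levels of the same sign can still collide, which is why the paper can close the argument only when there are three levels (radius two, where the single root can be handled separately, as in the rearrangement step of the radius-two theorem). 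Your ``disjoint target interval per layer'' fix is the natural idea, but the tension you identify --- a bijection onto $\{1,\ldots,m\}$ versus prescribed per-layer sum intervals --- is not a technicality to be anticipated; it is the open problem itself, since the attainable sums in a layer are determined by edges shared with neighboring layers and cannot be confined independently. A correct write-up here would prove the two partial theorems and state the general case as open, which is what the paper does.
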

Over the past decade, the above conjecture has garnered significant attention and garnered substantial support in various studies. Despite this, it still stands as an unsolved question. Diverse research communities have independently demonstrated that several graph families, as cited in works such as \cite{GS20, L18, LSW19, SH19, CSH19, SS20, SYZ19, SX17, YCO19, DY19, CZ19}, are consistent with Conjecture~\ref{HMS10}. 

The efforts to extend the 1-2-3 Conjecture to directed graphs have indeed met with success. 
Borowiecki, Grytczuk, and Pil\'{s}niak~\cite{BGP12} established that every simple directed graph can be equipped with an nsd $2$-edge-weighting. 
Chang, Jing, and Wang~\cite{CJW20} drew motivation from the directed version of the 1-2-3 Conjecture to introduce the concept of local antimagic orientation in a similar manner as follows: 
An undirected graph $G$ is said to admit a {\em local antimagic orientation}, if there exist an orientation $D$ on $E(G)$ and a bijective mapping $\tau$ from the set of arcs in $G_D$ to  the set $L=\{1, \ldots, |E(G)|\}$ such that $\tau$ is nsd. 
Moreover, they proved that every connected graph with maximum degree at most $4$ admits a local antimagic orientation by combinatorial nullstellensatz and proposed the following conjecture.  

\begin{conjecture}{\rm \cite{CJW20}}\label{CJW20}
    Every connected graph admits a local antimagic orientation.
\end{conjecture}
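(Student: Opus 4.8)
The plan is to establish Conjecture~\ref{CJW20} by an explicit construction driven by a breadth-first search. First I would root a BFS at a center $c$ of $G$, so that the resulting leveled spanning tree $T$ has vertex classes $V_0=\{c\},V_1,\ldots,V_\rho$ with $\rho$ the radius of $G$. The structural fact I would exploit is that every edge of $G$ joins two vertices lying in the same level or in two consecutive levels; consequently any pair of adjacent vertices is either \emph{horizontal} (same level) or \emph{vertical} (consecutive levels). Since a local antimagic orientation only requires adjacent vertices to receive distinct vertex sums, it suffices to (i) separate the vertex sums of consecutive levels into disjoint ranges and (ii) guarantee distinctness within each single level. This reduction is exactly where the local hypothesis is strictly weaker than full antimagicness: we never need to compare two vertices that are far apart in the BFS layering, which is why this scheme can work unconditionally while Conjecture~\ref{HMS10} remains open.

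Second, I would fix the orientation before choosing the labels. Orient every vertical edge from the lower to the higher level, so that each non-root vertex $v$ receives its tree parent edge $f_v$ as an in-edge, and orient the horizontal edges of each level along a prescribed within-level ordering. Under this orientation the vertex sum of $v\in V_i$ splits as $s(v)=\tau(f_v)-\sum_{w\text{ child of }v}\tau(f_w)+c(v)$, where $c(v)$ collects the contributions of the horizontal edges and of the non-tree vertical edges at $v$. The parent edge $f_v$ is private to $v$, which makes the labels $\tau(f_v)$ the natural knobs for steering each vertex sum, while the non-tree edges should receive the smallest labels $1,2,\ldots,m-n+1$ so that the perturbation terms $c(v)$ are confined to a controlled window.

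Third, I would assign the large labels $\{m-n+2,\ldots,m\}$ to the tree edges in \emph{bands indexed by depth}, with the band widths engineered so that each level-$i$ vertex sum is forced into an interval $R_i$ satisfying $R_i\cap R_{i\pm1}=\emptyset$. The design constraint is that a vertex's own parent-edge label, together with the telescoping cancellation against its children's parent edges, must dominate so that $s(v)$ lands in the intended band; within a single band I would then use the residual freedom in matching labels to parent edges, and in the horizontal orientations, to make the sums inside each level pairwise distinct. Because there are only $\rho+1$ levels, the number of bands — and hence the total spread demanded — is controlled by the radius, which is the reason for rooting at a center: minimizing $\rho$ is what allows the bands to fit inside $\{1,\ldots,m\}$.

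I expect the main obstacle to be reconciling the two requirements against a \emph{fixed} label set, particularly at high-degree vertices. The band decomposition must use each of $1,\ldots,m$ exactly once, yet still leave enough play inside every band to separate the within-level sums and to absorb the windows $c(v)$; and a vertex with many children contributes a large aggregate $\sum_w\tau(f_w)$ that must be out-dominated or cancelled rather than overwhelming the band. Verifying the governing inequality — that the width available to the level-$i$ band strictly exceeds the worst-case perturbation from its children's labels plus its horizontal and non-tree edges — is the technical heart of the argument, and it is precisely here that the bound on the number of levels by the radius, together with the level sizes $|V_i|$, must be invoked to close the estimate. The root, which has no parent edge, and any low-degree or degenerate configurations would be treated as boundary cases of the same scheme.
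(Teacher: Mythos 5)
Your structural skeleton matches the paper's proof of this conjecture (Theorem~\ref{NT}): BFS levels, the observation that adjacency occurs only within a level or between consecutive levels, private parent edges labeled last, and a monotone matching of those labels against partial sums to separate vertices inside a level. But the mechanism you propose for separating \emph{consecutive} levels --- orienting every vertical edge away from the root and confining each level's sums to disjoint magnitude bands $R_i$ --- has a genuine gap that cannot be closed. Under your orientation, the sum at $v\in V_i$ contains the term $-\sum_{w \text{ child of } v}\tau(f_w)$, where the children's parent-edge labels live in the band of level $i+1$. A vertex with $k$ children therefore has a sum displaced by roughly $k$ times a large label, i.e.\ up to order $n\cdot m$, while all bands must fit inside $\{1,\ldots,m\}$. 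So two adjacent vertices of level $i$ and $i+1$ with very different numbers of children (say, one a leaf of $T$ and one with many children) land nowhere near their assigned bands, and the inequality you flag as ``the technical heart'' --- band width exceeding the worst-case children perturbation --- is simply false in general. No choice of band widths, and no rooting at a center, repairs this: the problem is that a single parent label can never dominate the aggregate of many children's labels from a fixed set $\{1,\ldots,m\}$.

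The paper avoids this by never trying to control magnitudes across levels at all; it controls \emph{signs}. The cross-level edges are oriented with alternating parity --- all edges in $G[V_i,V_{i-1}]$ point from $V_i$ to $V_{i-1}$ when $i$ is odd, and the reverse when $i$ is even --- so that at every vertex of an odd level \emph{all} cross edges (to the level above and to the level below alike) are out-edges, and at every even-level vertex they are all in-edges. Combined with an Euler-circuit lemma (Lemma~\ref{L_1}) guaranteeing that the within-level contribution at each vertex is either negative or at most $\max L_i$, and the choice $\min L_0 > \max L_i$ for the cross-edge labels, every odd-level vertex ends with a negative sum and every even-level vertex with a positive one. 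Consecutive levels are then separated for free, independent of degrees, of the number of levels, and of the radius; the monotone matching you already had in mind finishes the within-level distinctness. The missing ideas in your proposal are exactly these two: the parity-alternating orientation of cross edges, and the Euler-circuit control of horizontal edges (your ``prescribed within-level ordering'' gives no bound on those contributions).
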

By relaxing the size of $L$, Hu, Ouyang, and Wang~\cite{HOW19} proved  that every $d$-degenerate graph $G$ admits a local $\{1, 2, \ldots, |E(G)|+d+2\}$-antimagic orientation. In other words, one can find an orientation $D$ of $G$ and an injective mapping from the set of the arcs $A(G_D)$ to $L=\{1, 2, \ldots, |E(G)|+d+2\}$ such that the adjacent vertices receive distinct vertex sums.

In this paper, we completely resolve Conjecture~\ref{CJW20} by showing a more general result. Imitating the terminologies in the literature, we say that a graph $G$ is {\em universal antimagic orientable} (resp. {\em universal local antimagic orientable})  provided that for every set $L$ of $|E(G)|$ positive numbers, there exist an orientation $D$ and a bijection from the set of directed edges $A(G_D)$ to $L$ that lead to pairwise distinct vertex sums for all  vertices (resp. any pair of adjacent vertices). We mange to show the following theorem: 

\begin{theorem}\label{NT}
    Every graph without isolated vertices is universal local antimagic orientable.
\end{theorem}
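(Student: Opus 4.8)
The plan is to reduce at once to a single connected component carrying at least one edge: isolated vertices are excluded and distinct components may be treated independently, while a lone edge $uv$ is settled immediately by orienting $u\to v$ and giving it its unique positive label $\ell$, since then $s(u)=-\ell\neq\ell=s(v)$ (here $s(\cdot)$ denotes the vertex sum). For a connected $G$ I would fix a root $r$, chosen to be a center so that the number of breadth-first layers is at most the radius plus one, and run a breadth-first search to obtain layers $V_0=\{r\},V_1,\dots,V_d$. The structural fact I rely on throughout is that every edge of $G$ joins two vertices lying in the same layer or in consecutive layers; in particular every non-root vertex has at least one neighbour one layer closer to $r$, namely its parent edge.

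The construction then orients all cross-layer edges \emph{upward} (from the deeper endpoint to the shallower one) and processes the vertices from the deepest layer up to the root, finalising the labels on one vertex's free edges at a time. Under the upward orientation a vertex's sum equals the total label on its downward edges minus the total label on its upward edges, so a vertex incident to many deeper neighbours automatically acquires a large \emph{positive} sum, while each of those deeper neighbours, for which the shared edge points outward, is pushed toward the \emph{negative} side. Exploiting positivity of the labels, this sign split separates such a vertex from its many deeper neighbours all at once, which is exactly what handles high-degree configurations such as the centre of a star or of a broom, where the number of already-finalised neighbours is large but no case-by-case dodging is actually needed. The remaining, more delicate separations are those between two adjacent same-layer vertices and between cross-layer neighbours that happen to land on the same side of $0$; these I would achieve using the still-available freedom in which label is placed on which free edge (together, for same-layer edges, with the choice of their orientation) when each vertex is finalised, so as to avoid the finite set of sums of that vertex's already-finalised neighbours.

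Two points require careful bookkeeping. First, fixing a vertex also perturbs its not-yet-finalised shallower neighbours, which is harmless because they are separated later when processed in turn; what must be guaranteed is that at every step enough unused labels of suitable rank remain to realise the required adjustment. Here the breadth-first layering, and the bound on the number of layers furnished by the radius, are what let me make the magnitude and sign demands depend only on a vertex's layer and thereby keep them globally consistent. Second, the root has no upward edge, so $s(r)$ is already determined once the vertices of $V_1$ have been finalised; I would therefore reserve, while processing $V_1$, the labels placed on the edges at $r$ so that $s(r)$ is forced strictly to one side and so differs from the sums of all its neighbours.

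The step I expect to be the main obstacle is precisely this uniform control: proving that for \emph{every} vertex simultaneously the still-free edges, together with the freedom in the label ranks, suffice to separate it from all of its already-finalised neighbours—most acutely at vertices with many deeper neighbours but few upward edges, and at the root, which has none. I would resolve it by a layer-indexed assignment of the high-rank labels along the breadth-first tree that reconciles these competing demands. Finally I would observe that every decision above is made purely by the rank of a label and by the sign of a contribution, using only that the labels are positive and pairwise distinct; hence the argument applies verbatim to an arbitrary set $L$ of $|E(G)|$ positive reals, yielding the claimed universal statement.
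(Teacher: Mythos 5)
You have the right skeleton---BFS layers, a sign-based separation between layers, parent edges kept as the last degree of freedom---but the proposal is not a proof: the whole content of the theorem sits in the step you yourself call ``the main obstacle'' and then defer to an unspecified ``layer-indexed assignment of the high-rank labels.'' The vertex-by-vertex greedy you describe genuinely fails without the global coordination you leave unconstructed. Take $G=K_n$ rooted at $r$, so that $V_1$ is a clique on the remaining $n-1$ vertices. Processing $V_1$ one vertex at a time, every clique edge at the last vertex $v$ was already labelled when its earlier neighbours were finalised, so $v$ has exactly one free edge (the edge $vr$) and exactly one unused label, yet its sum must avoid the $n-2$ already-fixed sums of its finalised neighbours: there is no choice left to make. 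The same exhaustion of freedom occurs whenever the same-layer edges are dense. Moreover, your decision to orient \emph{all} cross-layer edges upward forfeits the structural fact that could substitute for such coordination: as you concede, with that orientation two adjacent vertices in consecutive layers can land on the same side of $0$, so sign alone does not settle the cross-layer case either.

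The paper closes both holes with two devices missing from your sketch. First, an Euler-circuit lemma (Lemma~2.1 of the paper): for any prescribed label set $L_i$, the induced layer graph $G[V_i]$ admits an orientation and bijective labelling under which every vertex's intra-layer partial sum is either negative or at most $\max L_i$ (or the mirror-image statement), however dense $G[V_i]$ is; this disposes of all same-layer edges at once, with no greedy step. Second, cross-layer edges are oriented alternately by parity---all of them leave odd layers and all of them enter even layers---and one parent edge $e_v$ is reserved per vertex and labelled at the very end from the block $L_0$ of largest labels (chosen so that $\min L_0>\max L_i$ for all $i\ge 1$), in an order matched monotonically to the partial sums. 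This forces every odd-layer sum to be negative and every even-layer sum to be positive, separating all cross-layer neighbours by sign, while the monotone matching makes the sums within a layer pairwise distinct simultaneously, no matter how many finalised neighbours a vertex has. Your proposal would become a proof only after supplying a mechanism with this kind of simultaneous control; as written, it has a genuine gap.
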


Given a positive integer $n$ and a function $0\le p(n)\le 1$. We denote the random variable $G(n,p(n))$ a graph on $n$ vertices and two vertices are adjacent with probability $p$. 
Particularly, when $p(n)=1/2$,  $G(n,1/2)$ is the {\em random graph} which means all graphs on $n$ vertices have the same probability in the sample space. 
We say that {\em almost every graph has a given property} means
that if $P(n)$ is the probability that a random graph on $n$ vertices has the 
property, then $P(n)\rightarrow 1$ as $n \rightarrow \infty$. The second result presented in this paper is the following theorem on the random graph for Conjecture~\ref{HMS10}.

\begin{theorem}\label{random}
    Almost every graph admits an antimagic orientation.
\end{theorem}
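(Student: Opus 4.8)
The plan is to work throughout with the model $G=G(n,1/2)$ and to show the property holds asymptotically almost surely (a.a.s.), i.e. with probability tending to $1$. My first step is to record the structural features of $G$ that hold a.a.s. and that will be the \emph{only} probabilistic input: $G$ is connected with $m=(1+o(1))\,n^2/4$ edges, every degree lies in the window $n/2\pm O(\sqrt{n\log n})$ (so in particular $\delta(G)=n/2-O(\sqrt{n\log n})\gg\log n$), and $G$ is Hamiltonian and highly connected. Each of these follows from Chernoff bounds together with a union bound over the $n$ vertices, plus the standard Hamiltonicity results for dense random graphs. After this step the argument becomes deterministic: it suffices to construct an antimagic orientation for \emph{any} graph enjoying these features.

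For the construction I would use a two-layer scheme. Reserve a sparse spanning ``correction gadget'' $H$ inside $G$ — for concreteness a Hamilton cycle (or a spanning tree together with a near-perfect matching) — so that every vertex meets $H$ in a bounded number of edges. First I would orient and label the remaining ``bulk'' edges to obtain preliminary vertex sums that are already pairwise distinct except on a controllably small family of colliding classes. Then I would exploit $H$: since each vertex is incident to $H$, choosing the orientation (a sign) and the labels on the few gadget-edges at a vertex supplies, at that vertex, a tunable additive adjustment to its sum, and resolving the surviving collisions greedily separates all the sums. The requirement that the labels be exactly $\{1,\dots,m\}$ is met by deciding in advance which label ranks are dedicated to $H$ and which to the bulk. (An alternative route worth noting: since Alon, Kaplan, Lev, Roditty, and Yuster proved that graphs of minimum degree $\Omega(\log n)$ are antimagic and $\delta(G)\gg\log n$ a.a.s., one may instead try to verify that $G$ a.a.s.\ satisfies the hypotheses of an existing sufficient condition for antimagic orientation of dense or large-minimum-degree graphs and invoke it directly.)

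The step I expect to be the main obstacle is guaranteeing \emph{global} distinctness of the vertex sums rather than the merely local distinctness already granted by Theorem~\ref{NT}. An orientation couples the two endpoints of every edge, so a single relabeling or reorientation perturbs many sums at once, and the sums range over an interval of length $\Theta(m^2)=\Theta(n^4)$, where a naive union bound over the $\binom{n}{2}$ vertex pairs cannot rule out coincidences. Overcoming this will require quantitative anti-concentration — a Littlewood--Offord/Erd\H{o}s-type estimate showing that, under the residual randomness, each vertex sum avoids any prescribed target with good probability — used in tandem with the correction gadget to kill the few remaining collisions deterministically. The delicate point is controlling the dependence between the sums of two vertices that share many common neighbors, which is exactly where the quasirandomness and the bounded incidence with $H$ must be leveraged.
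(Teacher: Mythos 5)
Your proposal is not a proof but a plan whose hardest step is left open, and you say so yourself: the ``main obstacle'' of global distinctness is deferred to an unproven Littlewood--Offord-type anti-concentration argument. Concretely, two steps have no justification. First, you never explain how orienting and labeling the ``bulk'' edges yields sums that are ``pairwise distinct except on a controllably small family of colliding classes'' --- no construction or estimate is given. Second, the greedy repair via the gadget $H$ does not obviously work: every gadget edge has two endpoints, so each sign/label adjustment made to separate one colliding pair perturbs another vertex's sum, and nothing rules out cascading new collisions; with sums ranging over an interval of length $\Theta(n^4)$ and $\binom{n}{2}$ pairs to separate, the bookkeeping you'd need is precisely the content you have not supplied. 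Your parenthetical alternative is also not yet a proof as stated: the Alon--Kaplan--Lev--Roditty--Yuster theorem concerns undirected antimagic labelings, not orientations, so it cannot be invoked directly. (Ironically, this alternative is the most salvageable part: the paper's bibliography contains Shan's theorem that connected graphs of minimum degree at least $33$ admit antimagic orientations, and since $G(n,1/2)$ is a.a.s.\ connected with minimum degree $\sim n/2$, citing that result would finish the job --- but you would have to name it and check its hypotheses, which you do not.)

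The paper's actual argument shows that none of your machinery (Hamiltonicity, quasirandomness, anti-concentration) is needed. Its only probabilistic input is the classical Moon--Moser first-moment computation that almost every graph has diameter two: the expected number of vertex pairs with no common neighbor is $\binom{n}{2}(3/4)^{n-2}\to 0$. Everything else is deterministic: the paper proves that \emph{every} graph of radius two admits an antimagic orientation, by running the BFS-level scheme of Theorem~\ref{NT} on the three levels $V_0,V_1,V_2$ rooted at a center vertex. In that scheme the sums within each level are distinct and consecutive levels get opposite signs, so the only possible collision is between the root and a vertex in $V_2$; this is eliminated by labeling the tree edges at the root last and, if needed, permuting the labels on the edges $e_v$, $v\in V_1$, which reseparates $V_1$ without touching the root's sum. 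Diameter two implies radius at most two, and the two facts combine to give Theorem~\ref{random}. The lesson is that a strong deterministic theorem applied to a cheap a.a.s.\ structural property (small diameter) replaces the delicate randomness-based collision analysis you were attempting.
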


In the next section, we will present the proof of Theorem~\ref{NT} by introducing first a modification of a frequently used lemma for the study of the antimagic orientation problems. The result on the random graphs is given in Section 3.

\section{Local antimagic orientations for graphs}

Let $G$ be an undirected graph. Then $d_G(v)$ denote the degree of the vertex $v$ in $G$.
When $G$ endowed with an orientation $D$,  we use $d_D^+(v)$ and $d_D^-(v)$ to denote the out-degree and in-degree of a vertex $v$, respectively. If an edge labeling $\tau$ is also given, then we denote 
the vertex sum of $v$ with $s_{(D,\tau)}(v)$.
By utilizing Euler circuits, it is possible to approximate the vertex sums that are generated by the circuit. 
This method has been used in 
in several recent publications, such as Lemma 2.2 in \cite{SYZ19}, Lemma 2.1 in\cite{YCO19}, and Lemma 7 in~\cite{SS20}, to show different classes of graphs satisfying Conjecture~\ref{HMS10}. 
We apply the idea in these lemmas to obtain Lemma~\ref{L_1}, which is of significant importance in proving our main theorem.

\begin{lemma}\label{L_1}
Let $G$ be a graph on $m$ edges and $L$
be a set of $m$ positive real numbers. 
There exist an orientation $D$ of $G$ and a bijection $\tau: E(G)\rightarrow L$ such that for
any $v \in V (G)$, 
either $s_{(D, \tau)}(v)<0$ or $s_{(D, \tau)}(v)\le \max L$.  
\end{lemma}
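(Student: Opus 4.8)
The plan is to read both the orientation and the labeling off an Euler circuit, assigning the numbers in $L$ in increasing order as the edges are traversed. The guiding principle is that if, at each passage through a vertex, the edge entering carries a smaller label than the edge leaving, then that passage contributes a negative amount to the vertex sum. First I would dispose of easy reductions: an isolated vertex has vertex sum $0\le \max L$, so it may be ignored, and a disconnected graph can be handled one component at a time after partitioning $L$ into blocks of the correct sizes, since the largest label used in any block is at most $\max L$. Thus I may assume $G$ is connected with $m\ge 1$ edges.

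If $G$ is Eulerian, fix an Euler circuit $v_0e_1v_1e_2\cdots e_mv_m=v_0$, orient each $e_i$ from $v_{i-1}$ to $v_i$, and let $\tau(e_i)$ be the $i$-th smallest element of $L$. At every intermediate passage through a vertex $v=v_j$ with $1\le j\le m-1$, the edge $e_j$ enters and $e_{j+1}$ leaves, contributing $\tau(e_j)-\tau(e_{j+1})<0$ to $s_{(D,\tau)}(v)$; hence every vertex other than $v_0$ has strictly negative vertex sum. The start vertex $v_0$ acquires $-\tau(e_1)$ at the start and $+\tau(e_m)$ at the end, together with negative terms from any intermediate passages, so $s_{(D,\tau)}(v_0)\le \tau(e_m)-\tau(e_1)<\max L$. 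This already yields the desired dichotomy in the Eulerian case.

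For a general connected $G$ I would restore the Eulerian property by adjoining an auxiliary vertex $v^*$ joined to every odd-degree vertex of $G$; since the number of odd-degree vertices is even, the resulting multigraph $H$ is connected with all degrees even. Take an Euler circuit of $H$ beginning and ending at $v^*$, orient the edges of $G$ along it, and assign the elements of $L$ in increasing order to the edges of $G$ in their order of appearance, skipping the auxiliary edges, which are never labeled. Because $v^*\notin V(G)$ and the auxiliary edges are not in $E(G)$, they contribute nothing to any $s_{(D,\tau)}$. Every original vertex is entered and left equally often, so every passage using two $G$-edges still contributes negatively exactly as above. The only passages that can contribute positively are those at an odd-degree vertex $w$ in which the auxiliary edge leaves $w$ while a $G$-edge enters; since $w$ meets exactly one auxiliary edge there is at most one such passage, contributing a single $+\tau(e)\le\max L$, with all other contributions at $w$ negative. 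Hence $s_{(D,\tau)}(w)\le\max L$, while every vertex incident to no such passage keeps a negative sum.

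The routine points to verify are that the $G$-edge entering a vertex in a given passage always precedes in the circuit the $G$-edge leaving, so it receives the smaller label (this makes each two-$G$-edge passage strictly negative), and that adding the at-most-one positive term of size $\le\max L$ to the remaining nonpositive terms keeps the total at most $\max L$. The main obstacle is precisely the bookkeeping at the odd-degree vertices: one must confirm that the auxiliary construction cannot create a vertex whose sum exceeds $\max L$, which is exactly the observation that auxiliary edges carry no label and each odd-degree vertex is incident to only one of them.
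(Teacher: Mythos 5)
Your proof is correct and rests on the same core mechanism as the paper's: Eulerize the graph, orient the edges along an Euler circuit, and assign the labels of $L$ in increasing order of appearance along the circuit, so that every passage through a vertex contributes a negative amount except for at most one term bounded by $\max L$. The one place you diverge is the Eulerization gadget: the paper adds a matching of $t$ new edges pairing up the $2t$ odd-degree vertices and must then start the circuit at an odd-degree vertex with an added edge first, which forces a separate case analysis for the initial vertex (its alternating sum has the opposite sign pattern) on top of the ``missing term'' analysis at the other odd-degree vertices. Your apex-vertex construction --- joining a single new vertex $v^*$ to all odd-degree vertices and starting the circuit at $v^*$ --- buys a genuinely cleaner argument: since $v^*\notin V(G)$, every vertex of $G$ is an intermediate vertex of the circuit, so every visit is an enter--leave pair, and the only bookkeeping left is that each odd-degree vertex meets exactly one unlabeled auxiliary edge, hence picks up at most one positive term $\le \max L$. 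Both proofs are equally valid; yours trades the paper's multigraph $G^*$ and its start-vertex case split for one extra vertex and a uniform analysis.
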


\begin{proof}
If $G$ comprises more than one connected component, we can partition $L$ into disjoint subsets. This allows us to handle each component separately with labels falling within their respective subset. Consequently, it suffices demonstrate the lemma for connected graphs.  
Let 
$V_1 = \{v \in V(G) \mid d_G(v) \text{ is odd}\}$. According to the handshaking lemma, $|V_1|=2t$ is an even number. If $V_1=\varnothing$, we define $G^* = G$, otherwise, we define $G^{*}$ by incorporating $t$ new edges into $G$ in such a way that each new edge connects a pair of vertices in $V_1$, and denote this set of new edges as $E^{*}$. Note that $G^{*}$ may be a multiple graph and the degree of every vertex in $G^{*}$ is an even number.

Now, we construct an Euler circuit, denoted as $C$ in $G^*$ according to the following rules: If $G^* = G$, then we have the flexibility to select any vertex as the starting point and construct $C$. However, if $G^* \neq G$, we opt for a vertex in $V_1$ as our initial vertex. Since this chosen vertex is incident to exactly one edge in $E^*$, we designate this edge as the first edge of $C$. Suppose that
\[C = u_1, e_1, u_2, e_2, \ldots , u_{m+t}, e_{m+t}, u_1,\]
where $u_i$’s are the vertices in $V (G)$, allowing for possible repetitions , and $e_1, e_2, \ldots, e_{m+t}$ are
exactly the edges in $E(G^*).$

Let $e_{i_1}, e_{i_2}, \ldots , e_{i_m}$, $i_1<i_2<\cdots<i_m$, be the edges in $E(G)$. By orienting each edge a direction the same as it is in $C$, we obtain an orientation $D$ of $G$ such that every vertex $v$ in the directed graph $D$ satisfies $\left | d^+_D(v)-d^-_D(v) \right |\le 1$. 
Suppose $L=\{a_1, a_2, \ldots, a_{m}\}$ and $a_j$ is increasing in $j$. Define $\tau : E(G) \rightarrow L$ with $\tau (e_{i_j}) = a_{j}$ for $1 \le  j \le  m$.
The orientation $D$ and labeling $\tau$ induce the vertex sums as follows: 
When $G^* = G$, the a vertex sum of a vertex is the sum of an alternating series
\begin{equation}\label{eq1}
s_{(D, \tau)}(v)=
- a_{j_1}+a_{j_2}-a_{j_3}+\cdots+a_{j_{d_G(v)}},    
\end{equation}
 for $v=u_1$, the initial vertex of $C$, or   
\begin{equation}\label{eq2}
s_{(D, \tau)}(v)=
a_{j_1}-a_{j_2}+a_{j_3}-\cdots-a_{j_{d_G(v)}},    
\end{equation}
where $1\le j_1<j_2<\cdots\le j_{d_G(v)}$. It is straightforward to verify that the vertex sum in~(\ref{eq1}) is less than $a_m$ and the vertex sum in~(\ref{eq2}) is negative.  
For $G^* \neq G$, if $\left | d^+_D(v)-d^-_D(v) \right |=0$, 
then the vertex sum $s_{(D, \tau)}(v)$ has a form the same as~(\ref{eq2}), which is negative.  
Consider $ |d^+_D(v)-d^-_D(v)| =1$. If $v=u_1$, by the choice of $e_1$, we have 
\[s_{(D, \tau)}(v)=
a_{j_1}-a_{j_2}+a_{j_3}-\cdots+a_{j_{d_G(v)}},\]
where $1\le j_1<j_2<\cdots<j_{d_G(v)}$. 
So $s_{(D, \tau)}(v)<a_m$. 
If $v\neq u_1$, then $s_{(D, \tau)}(v)$ can be viewed as the sum of a series similar to (\ref{eq2}) but one term $a_{j_k}$ is missing. 
Once the missing term is associated with a plus sign, then $s_{(D, \tau)}(v)$ is negative. Otherwise, the $s_{(D, \tau)}(v)$ is the missing term plus a negative number, which is less than $a_m$. 
%
%
%
\end{proof}

\remark By reversing the direction of every edge in the above orientation $D$, we can obtain another orientation $D'$ of $G$ that gives   
either $s_{(D', \tau)}(v)>0$ or $s_{(D', \tau)}(v)\ge -\max L$ for
any $v \in V (G)$.

To establish the proof of Theorem \ref{NT}, we employ the technique originally utilized by Cranston, Liang, and Zhu~\cite{CLZ15} to establish the antimagic labeling for odd regular graphs. 
Their approach can be summarized as follows: Begin by selecting a vertex $u$ from the graph $G$. 
Then, partition the vertex set $V(G)$ into levels $V_0, V_1, \ldots, V_d$, where $V_i=\{v\mid d(v, u)=i\}$, $d(v,u)$ is the distance of $v$ and $u$, and $d$ represents the maximum distance of a vertex from $v$. Define $G[V_i]$ as the subgraph induced by $V_i$ and $G[V_i, V_{i-1}]$ as the bipartite subgraph induced by the two sets $V_i$ and $V_{i-1}$. 
For every positive $i$ and every vertex $v$ in $V_i$, reserve an edge $e_v$ that is incident to vertices in $V_i$ and $V_{i-1}$. 
Next, create a labeling $f$ by sequentially labeling the edges in $G[V_d]$, $G[V_d, V_{d-1}]$ and $\{e_v:v\in V_d\}$, and so on, up to $G[V_1]$ and $G[V_1, V_0]$, while ensuring that the smallest unused labels are used and adhering to specific additional rules. 
It is worth mentioning that we can accomplish the partition of the set $V(G)$ and the selection of the edges $e_v$ by using the breadth-first search (BFS) to construct a rooted tree for $G$. 
\bigskip

\noindent{\em Proof of Theorem~\ref{NT}.} 
Since vertices in two components of a graph are not adjacent, it suffices to show the theorem for connected graphs. 

Let $G$ be a connected graph, and let $L$ be a set of $|E(G)|$ positive real numbers. 
First pick a vertex $u$ and construct a rooted tree $T$ using the BFS with root $u$. Suppose that the set $V(G)$ is partitioned into sets $V_0, V_1, \ldots, V_d$, where $V_i$ is the set of vertices in the $i$th level of $T$. 
Then partition the set $L$ into subsets $L_0, L_1,\ldots, L_d$ satisfying
\begin{description}
    \item{(1)} $|E(G[V_i])|=|L_i|$  for $1\le i \le d$ and $L_0=L-(L_1\cup L_2\cup\cdots\cup L_d)$;
    \item{(2)}  For $1\le i\le d$, $\min L_0>\max L_i$.
\end{description}

Next, we apply Lemma~\ref{L_1} to 
construct the orientations $D_i$ of $G[V_i]$ and define the corresponding labeling $\tau_i:E(G[V_i])\rightarrow L_i$ for $1\le i\le d$ such that for 
any $v\in V(G_i)$ either $s_{(D_i,\tau_i)}(v)<0$ or $s_{(D_i,\tau_i)}(v)\le \max L_i$ when $i$ is odd, and 
for any $v\in V(G_i)$ either $s_{(D_i,\tau_i)}(v)>0$ or $s_{(D_i,\tau_i)}(v)\ge -\max L_i$ when $i$ is even.
We complete the orientation of $G$ by directing the edges in $G[V_{i},V_{i-1}]$ from $V_i$ to $V_{i-1}$ if $i$ is odd and from $V_{i-1}$ to $V_i$ if $i$ is even.
For each vertex $v\neq u$, we choose the edge incident to $v$ on the path connecting $u$ and $v$ in $T$ as $e_v$. 
For each unlabeled edge not in $T$, we arbitrary label it with an unused label in $L_0$.
Finally, we label the edges $e_v$.
In the following, we assume that $d$ is odd, the case of even $d$ is analogous. 
For each vertex $v$ in $V_d$, $e_v$ is the only unlabeled edge incident to $v$. 
Recall that we have either $s_{(D_d,\tau_d)}(v)<0$ or $s_{(D_d,\tau_d)}(v)\le \max L_d$. 
Moreover, all edges in $G[V_d,V_{d-1}]$ are directed from $V_d$ to $V_{d-1}$. So the partial vertex sum of a vertex $v$, $s(v)$, induced by the labeled edges is either a negative number or at most $\max L_d$. 
We label each $e_v$ with an unused label $\tau(e_v)$ in $L_0$ with the condition that $\tau(e_v)<\tau(e_{v'})$ implies $s(v)\ge s(v')$. 
Since $\min L_0>\max L_d$, not only the vertex sums are all distinct but also all of them are negative. 
The next step is to label $e_v$ for each vertex $v$ in $V_{d-1}$, which is only unlabeled edge incident to $v$ now. 
The partial vertex sum $s(v)$, induced by the labeled edges is either a  positive number or at least $-\max L_{d-1}$. 
This time we label each $e_v$ with an unused label $\tau(e_v)$ in $L_0$ with the condition that $\tau(e_v)<\tau(e_{v'})$ implies $s(v)\le s(v')$. 
Thus, the vertex sums of all vertices in $V_{d-1}$ are distinct and positive. 
We use the above strategies to label the unlabeled edges $e_v$ in $G[V_{d},V_{d-1}], G[V_{d-1},V_{d-2}],\ldots,G[V_1,V_0]$, accordingly.

Based on the BFS construction, two vertices can only be adjacent to each other in the same level or in two consecutive levels. 
We have demonstrated that for any two consecutive levels, the vertex sums of their vertices exhibit distinct signs. 
Also, we have established that for any pair of vertices belonging to the same $V_i$, the vertex sums are distinct. 
As a conclusion, we complete the proof of Theorem~\ref{NT}.
\hfill\qed\bigskip
 
\section{The random graphs}

We begin with a deterministic result on Conjecture~\ref{HMS10}. 
The {\em eccentricity} of a vertex $v$ in a graph $G$ is the maximum distance of a vertex in $G$ to $v$, and the {\em radius} of a graph $G$ is the minimum eccentricity of a vertex in $G$. 
Let $G$ be a graph of radius two. We can construct a rooted tree of $G$, which has three levels by designating the vertex of the minimum eccentricity as the root.   
Thereby, when applying the labeling method in the proof of Theorem~\ref{NT} to $G$, it turns out that either all vertex sums are pairwise distinct or at most one pair of vertices have the same vertex sum. When the latter case happens, one of the two vertices is the root and the other is a vertex which is not adjacent to the root. 
Indeed, we can make all the vertex sums different by modifying the labeling strategies.

\begin{theorem}
Every graph of radius of two is universal antimagic orientable.
\end{theorem}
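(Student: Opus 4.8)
The plan is to run the labeling procedure from the proof of Theorem~\ref{NT} and then repair the single conflict it can leave behind with one carefully chosen swap of two labels. Choose the root $u$ to be a vertex realizing the radius, so that $u$ has eccentricity $2$ and the BFS tree has exactly three levels $V_0=\{u\}$, $V_1=N(u)$ and $V_2$. Apply the construction of Theorem~\ref{NT} verbatim (it is valid for any positive label set $L$, which is what will give \emph{universal} antimagic orientability). By the alternating-sign structure established there, the sums on $V_1$ are negative while those on $V_0$ and $V_2$ are positive, within each level the sums are pairwise distinct, and consecutive levels carry opposite signs. Since in $G$ two vertices can be adjacent only within one level or between consecutive levels, every pair of adjacent vertices already receives distinct sums. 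The only way two \emph{non}-adjacent vertices can coincide is for both to be positive, i.e.\ for both to lie in $V_0\cup V_2$; as $u$ is the sole vertex of $V_0$ and the $V_2$-sums are distinct, this means at most one collision, of the form $s_{(D,\tau)}(u)=s_{(D,\tau)}(w^*)$ with $w^*\in V_2$ (necessarily not adjacent to $u$). If no collision occurs we are already done.

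Suppose then $s_{(D,\tau)}(u)=s_{(D,\tau)}(w^*)=\sigma$. Pick any $y\in V_1$ adjacent to $w^*$; such a $y$ exists since the parent of $w^*$ in the BFS tree qualifies. In the orientation produced by Theorem~\ref{NT} the edge $uy$ is directed from $y$ into $u$ (edges from $V_1$ to $V_0$ point toward $u$) and the edge $yw^*$ is directed from $y$ into $w^*$ (edges from $V_1$ to $V_2$ point into $V_2$); in particular both are out-edges at $y$. Now swap their two labels $p=\tau(uy)$ and $q=\tau(yw^*)$. Because both edges are out-edges at $y$, the joint contribution $-p-q$ of this pair to $s_{(D,\tau)}(y)$ is unchanged, so $s_{(D,\tau)}(y)$, and indeed every vertex sum other than those of $u$ and $w^*$, is untouched; only $s_{(D,\tau)}(u)$ changes by $q-p$ and $s_{(D,\tau)}(w^*)$ by $p-q$. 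As the labels are distinct, $p\neq q$, so the equality $s_{(D,\tau)}(u)=s_{(D,\tau)}(w^*)$ is destroyed by a swap that is otherwise completely local.

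After the swap the two altered sums are $\sigma+(q-p)$ and $\sigma+(p-q)$, symmetric about $\sigma$; they differ from each other and, being positive, from all the negative $V_1$-sums, so the only thing left to verify is that neither of them lands on one of the at most $|V_2|-1$ remaining positive values $\{s_{(D,\tau)}(w):w\in V_2\setminus\{w^*\}\}$. This collision-avoidance step is where I expect the real work to lie, and it is the main obstacle. Each admissible choice of $y\in V_1\cap N(w^*)$ yields a shift of magnitude $|p-q|$, and the pool of available shifts can be enlarged by first permuting the labels among the edges incident to $u$ — an operation that leaves $s_{(D,\tau)}(u)=\sigma$ fixed and merely reshuffles the $V_1$-sums among themselves while keeping them negative and distinct — so that $p=\tau(uy)$ may be made to range over the top $d_G(u)$ labels. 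A counting argument over these shifts then produces one that dodges the finitely many forbidden values; the degenerate case in which $w^*$ has a unique neighbour in $V_1$ is treated separately by instead re-selecting the label $q=\tau(yw^*)$ among the available large labels of $L$. The swap itself is exact, and the sign structure inherited from Theorem~\ref{NT} carries out all of the remaining bookkeeping.
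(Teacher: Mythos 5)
Your proposal starts from the same observation as the paper --- run the construction of Theorem~\ref{NT} on the three-level BFS tree and note that the only possible collision is between the root $u$ and a single vertex $w^*\in V_2$ --- but your repair step has a genuine gap, and it is exactly the step you yourself flag as ``the main obstacle.'' The swap of $p=\tau(uy)$ with $q=\tau(yw^*)$ does destroy the collision $s(u)=s(w^*)$, but it can create a new collision between $u$ (or $w^*$) and another vertex of $V_2$, and neither of your two devices for avoiding this works. First, permuting the labels on the edges incident to $u$ does \emph{not} ``merely reshuffle the $V_1$-sums among themselves while keeping them \ldots distinct'': each $y\in V_1$ has sum $s(y)=(\text{fixed partial sum})-\tau(uy)$, so a permutation shifts each sum by a difference of labels rather than permuting the sums, and distinctness --- which Theorem~\ref{NT} obtained only through the specific sorted assignment of those labels --- is easily destroyed. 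For instance, with partial sums $-1,-2$ and labels $10,11$, the sorted assignment gives sums $-11,-13$, while the swapped assignment gives $-12,-12$. Hence you cannot let $p$ range freely over the labels at $u$ and simultaneously keep the $V_1$-sums distinct; the two requirements conflict. Second, in the degenerate case where $w^*$ has a unique neighbour $y$ in $V_1$, there is exactly one available swap, whose shift $q-p$ may land precisely on a forbidden value, and ``re-selecting the label $q$ among the available large labels of $L$'' is not an available move: $\tau$ is a bijection onto $L$, every label is already in use, so changing $q$ forces yet another swap that perturbs further vertex sums. Since the collision-avoidance counting is never actually carried out, the proof is incomplete at its critical point.

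The paper avoids all of this by modifying the procedure rather than repairing its output. It withholds one tree edge incident to $u$ (denoted $e_u$) from the $L_0$-labeling, and labels $e_u$ \emph{together with} the reserved edges $e_v$, $v\in V_2$, in one sorted greedy pass; since all of these are in-edges of their respective vertices and carry labels exceeding $\max L_2$, the sums on $V_0\cup V_2$ come out pairwise distinct and positive by construction, so no collision between $u$ and $V_2$ can arise in the first place. Only then does the paper use the invariance you correctly identified (permuting labels on the edges at $u$ leaves $s(u)$ unchanged, because they are all in-edges of $u$) --- but in the opposite direction from you: to re-sort those labels against the $V_1$ partial sums and thereby make the $V_1$-sums distinct, at no cost to the already-settled positive sums. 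In short, the order of operations (settle $V_0\cup V_2$ jointly first, then fix $V_1$ by a sum-preserving permutation at $u$) is precisely what eliminates the collision-avoidance argument that your post-hoc swap cannot complete.
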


\begin{proof}
Let $G$ be a graph of radius two, and $u$ be a vertex with the minimum eccentricity. 
First use BFS to construct a rooted tree $T$ with root $u$.
The terms $V_i$, $L_i$, and $e_v$ are defined analogously as in the proof of Theorem~\ref{NT}. 

We orient the edges, label the edges in $G[V_1]$ and $G[V_2]$ with the labels in $L_1$ and $L_2$, and arbitrary label all the unlabeled edges not in $T$ with the labels in $L_0$ as previous. 
Next, for the edges in $T$ we label all except for one edges incident to $u$ with the unused labels in $L_0$. 
Thus, for each vertex in $V_0\cup V_2$, there is exactly one unlabeled edge incident to it.  
Now, we label these edges with the remaining labels according to the partial vertex sums of the vertices incident to them. 
More precisely, let $s(v)$ be the sum of the labels of the labeled edges incident to $v$. We label each $e_v$ (here we abuse the notation $e_u$ to denote the only unlabeled edge incident to $u$) with an unused label $\tau(e_v)$ in $L_0$ subject to the condition that $\tau(e_v)<\tau(e_{v'})$ implies $s(v)\le s(v')$. 
This guarantees that the vertex sums of the vertices in $V_0\cup V_2$ are all distinct and positive. 
However, we might encounter the problem that two vertices in $V_1$ have the same vertex sum. 
Once this happens, the solution is to rearrange the labels of $e_v$ for $v$ in $V_1$. 
Namely, we compare the partial vertex sums of the vertices in $V_1$ contributed by the edges in $G[V_1]$ and in $G[V_2,V_1]$, then reassign the labels to the edges $e_v$ accordingly to make the vertex sums distinct. 
Since this rearrangement does not change the vertex sum of the root $u$, the vertex sums of the vertices in $V_0\cup V_2$ are all positive and distinct, while the vertex sums for vertices in $V_1$ now are all negative and distinct. So the theorem is proved.
\end{proof}

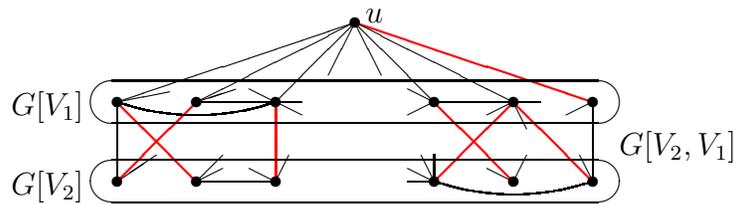
\begin{figure}[ht]
    \centering
\begin{picture}(300,80)
    \textcolor{red}
    {\thicklines
    \put(150,70){\line(3,-1){90}}
    \put(60,10){\line(1,1){30}}
    \put(90,10){\line(-1,1){30}}
    \put(120,10){\line(0,1){30}}
    \put(180,10){\line(1,1){30}}
    \put(210,10){\line(-1,1){30}}
    \put(240,10){\line(-1,1){30}}
    }
    \put(60,10){\circle*{4}}
    \put(90,10){\circle*{4}}
    \put(120,10){\circle*{4}}
    \put(180,10){\circle*{4}}
    \put(210,10){\circle*{4}}
    \put(240,10){\circle*{4}}
    \put(150,10){\oval(200,16)}
    \put(20,5){$G[V_2$]}
    \put(60,40){\circle*{4}}
    \put(90,40){\circle*{4}}
    \put(120,40){\circle*{4}}
    \put(180,40){\circle*{4}}
    \put(210,40){\circle*{4}}
    \put(240,40){\circle*{4}}
    \put(150,40){\oval(200,16)}
    \put(250,20){$G[V_2,V_1]$}
    \put(20,35){$G[V_1]$}
    \put(150,70){\circle*{4}$u$}
    \put(150,70){\line(1,-1){30}}
    \put(150,70){\line(2,-1){60}}
    \put(150,70){\line(-1,-1){30}}
    \put(150,70){\line(-2,-1){60}}
    \put(150,70){\line(-3,-1){90}}
    \put(150,70){\line(1,-2){10}}
    \put(150,70){\line(-1,-2){10}}
    \put(60,40){\line(5,1){20}}
    \put(90,40){\line(1,0){30}}
    \put(90,40){\line(3,1){15}}
    \put(120,40){\line(1,0){10}}
    \put(120,40){\line(2,-1){10}}
    \put(120,40){\line(-2,1){10}}
    \put(180,40){\line(1,0){30}}
    \put(180,40){\line(-2,1){10}}
    \put(180,40){\line(-2,-1){10}}
    \put(210,40){\line(1,0){10}}
    \put(210,40){\line(1,-2){5}}
    \put(210,40){\line(-2,-1){10}}
    \put(240,40){\line(-2,-1){10}}
    \qbezier(60,40)(90,30)(120,40)
   \put(60,10){\line(3,2){15}}
    \put(60,10){\line(0,1){30}}
    \put(90,10){\line(1,0){30}}
    \put(90,10){\line(1,1){10}}
    \put(90,10){\line(2,1){10}}
    \put(120,10){\line(1,2){5}}
    \put(120,10){\line(-2,1){10}}
    \put(180,10){\line(-2,1){10}}
    \put(180,10){\line(-1,0){10}}
    \put(180,10){\line(-1,2){5}}
    \put(180,10){\line(0,1){10}}
    \put(210,10){\line(1,2){5}}
    \put(210,10){\line(-2,1){10}}
    \put(240,10){\line(0,1){30}}
    \put(240,10){\line(-1,2){5}}
    \qbezier(180,10)(210,0)(240,10)
\end{picture}
    \caption{Label the red edges according to the partial vertex sums of vertices in $V_2\cup \{u\}$. If two vertices in $V_1$ have the same vertex sum, then rerrange the labels of the edges incident to $u$.}
    \label{fig:radius2}
\end{figure}

In particular, we have the following result for Conjecture~\ref{HMS10}

\begin{corollary}\label{radius2}
Every graph of radius two admits an antimagic orientation. 
 \end{corollary}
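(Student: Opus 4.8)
The plan is to obtain this as an immediate specialization of the theorem established just above. Recall that a graph $G$ is called universal antimagic orientable precisely when, for \emph{every} set $L$ of $|E(G)|$ positive real numbers, there exist an orientation $D$ and a bijection from the arcs of $G_D$ to $L$ that produce pairwise distinct vertex sums at all vertices of $G$. The canonical label set $\{1,2,\ldots,|E(G)|\}$ is one particular instance of such an $L$, being a collection of $|E(G)|$ distinct positive numbers.

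First I would apply the preceding theorem to $G$ with this specific choice $L=\{1,2,\ldots,|E(G)|\}$. The theorem then furnishes an orientation $D$ together with a bijection $\tau\colon E(G)\to\{1,2,\ldots,|E(G)|\}$ for which the vertex sums $s_{(D,\tau)}(v)$ are pairwise distinct over all $v\in V(G)$. Comparing this outcome with the definition of an antimagic orientation recalled in the introduction — an orientation together with a bijection onto $\{1,2,\ldots,|E(G)|\}$ yielding distinct vertex sums at every vertex — shows that the pair $(D,\tau)$ is exactly an antimagic orientation of $G$, which is all that the corollary asserts.

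There is essentially no obstacle to overcome here, since the whole combinatorial content has already been carried by the theorem; the corollary merely restricts the universal statement to one admissible label set. The only point meriting a remark is purely definitional, namely the observation that $\{1,2,\ldots,|E(G)|\}$ indeed qualifies as an eligible set $L$ in the hypothesis of the theorem, so that the universal guarantee specializes to it without any extra argument.
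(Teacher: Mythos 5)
Your proposal is correct and is exactly the argument the paper intends: the corollary follows immediately from the preceding theorem by specializing the universal label set $L$ to $\{1,2,\ldots,|E(G)|\}$, which the paper signals with ``In particular'' and no separate proof. Nothing further is needed.
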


A well known theorem in random graph theory is that almost every graph has diameter two~\cite{MM66}. For the sake of completeness, we present its proof here. Recall that the Markov's inequality says that if $X$ is a random variable with nonnegative values, then $P(X\ge t)\le E(X)/t$ for any $t>0$.

\begin{theorem}\label{diameter}{\rm \cite{MM66}}
    Almost every graph has diameter two.
\end{theorem}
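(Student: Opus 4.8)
The plan is to show that with probability tending to $1$, the random graph $G(n,1/2)$ has diameter exactly two, by controlling the two ways this can fail: the diameter being at most one, or the diameter being at least three. The first case is immediate, since the only graph of diameter at most one is the complete graph $K_n$, which occurs with probability $(1/2)^{\binom{n}{2}}\to 0$. Thus almost surely the graph is not complete, so its diameter is at least two, and the substance of the argument is to prove that the diameter is at most two with probability tending to $1$.

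To this end, I would introduce a counting random variable and apply Markov's inequality. Recall that a graph has diameter at most two precisely when every pair of distinct vertices is either adjacent or shares a common neighbor. Let $X$ denote the number of unordered pairs $\{u,v\}$ of vertices that are at distance at least three, that is, non-adjacent and with no common neighbor. The graph fails to have diameter at most two exactly when $X\ge 1$, and by Markov's inequality $P(X\ge 1)\le E(X)$. It therefore suffices to show that $E(X)\to 0$.

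The key computation is the estimate of $E(X)$ via linearity of expectation. Fix a pair $\{u,v\}$. For each of the remaining $n-2$ vertices $w$, the vertex $w$ is a common neighbor of $u$ and $v$ exactly when both edges $uw$ and $vw$ are present, which happens with probability $(1/2)^2=1/4$; hence $w$ fails to be a common neighbor with probability $3/4$. Because these events depend on disjoint sets of edges, they are mutually independent over the choices of $w$, so the probability that $\{u,v\}$ has no common neighbor is $(3/4)^{n-2}$. Summing over all $\binom{n}{2}$ pairs gives $E(X)\le \binom{n}{2}(3/4)^{n-2}$.

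Finally, I would observe that the exponential decay of $(3/4)^{n-2}$ dominates the polynomial growth of $\binom{n}{2}$, so $E(X)\to 0$ as $n\to\infty$, whence $P(X\ge 1)\to 0$ and the diameter is at most two asymptotically almost surely. Combined with the vanishing probability of the complete graph, this shows that the diameter equals two with probability tending to $1$. The only real obstacle is the independence step in the expectation computation, namely verifying that the common-neighbor events over distinct $w$ involve disjoint edges; once this is in hand, the conclusion follows from the routine fact that exponentials beat polynomials.
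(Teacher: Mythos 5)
Your proof is correct and follows essentially the same route as the paper: a first-moment argument via Markov's inequality applied to the count of vertex pairs with no common neighbor, with the identical estimate $\binom{n}{2}\left(\frac{3}{4}\right)^{n-2}\to 0$. The only difference is that you additionally rule out the complete graph to pin down the diameter as exactly two, a point the paper glosses over (its claim that $X=0$ implies diameter two really only yields diameter at most two), which is a minor but harmless refinement.
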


\begin{proof} 
Let $X(G(n,\frac{1}{2}))$ denote the number of pairs of vertices in $G(n,\frac{1}{2})$ which has no common neighbor. Thus, $X=0$ implies that $G(n,\frac{1}{2})$ has diameter two. Set $t=1$ in the Markov's inequality, and we show $E(X)\rightarrow 0$. 
For any two unordered vertices $v_i$ and $v_j$, define the random variable $X_{ij}$ to be $1$ if $v_i$ and $v_j$ have no common neighbor and $0$ otherwise. Then $P(X_{ij}=1)=[1-(\frac{1}{2})^2]^{(n-2)}$ and $E(X)=\sum_{1\le i<j\le n}E(X_{ij})=\binom{n}{2}(\frac{3}{4})^{n-2}$. Therefore, $E(X)\rightarrow 0$ as $n\rightarrow\infty$.    
\end{proof}

Since every graph of diameter two has a radius at most two, combining Corollary~\ref{radius2} and Theorem~\ref{diameter}, we obtain Theorem~\ref{random}.

\end{document}